\theoremstyle{plain}
\newtheorem{theorem}{Theorem}
\newtheorem{lemma}[theorem]{Lemma}
\def\ds{\displaystyle}
\def\<{\langle}
\def\>{\rangle}
\begin{document}

\begin{center}
\textbf{General divisor functions in arithmetic progressions to large moduli}
\end{center}
\bigskip

\begin{center}
Fei Wei, Boqing Xue and Yitang Zhang
\end{center}

\bigskip

%\tableofcontents

\textit{Introduction. } Let $a$ and $d>0$ denote integers with $(a,d)=1$ .  Given  an arithmetic function  $f$, the study of  the distribution of $f$ in the residue class $a(\text{mod }d)$  is  reduced  to estimating the quantity
\[
\Delta_f(x;d,a)=\ds\sum_{\substack{n\leq x\\n\equiv a(d)}}f(n)-\frac{1}{\varphi(d)}\ds\sum_{\substack{n\leq x\\(n,d)=1}}f(n).\eqno(1)
\]

Let $\tau_k$ denote the $k$-fold divisor function. Given $k$ and  $B>0$,  for fairly general  arithmetic functions $f$ satisfying
\[
f(n)\ll\tau_k(n)(1+\log n)^B,
\]
 a reasonable non-trivial upper  bound for $\Delta_f(x;d,a)$ is
\[
\Delta_f(x;d,a)\ll \frac{1}{\varphi(d)}\frac{x}{(\log x)^A}\eqno(2)
\]
 with $d$ lying in a certain range depending on $x$,  where $A$ is an arbitrarily large constant. For example, if  $f=\Lambda$, the von Mangoldt function,  then (2) is valid uniformly in the range $d<(\log x)^B$ (the Siegel-Walfisz theorem), while if
$f=\tau_k$, then (2) is valid uniformly in the range
  $d\leq x^{\vartheta_{k}-\varepsilon}$, where (see[2], [4], [5], [6] and [8])
\[
\vartheta_{2}=\frac{2}{3},\,\, \vartheta_{3}=\frac{21}{41},\,\, \vartheta_{4}=\frac{1}{2},\,\,\vartheta_{5}=\frac{9}{20},\,\,\vartheta_{6}=\frac{5}{12},\,\,\vartheta_{k}=\frac{8}{3k}\,\,(k\geq 7).
\]

Many problems in analytic number theory are reduced to proving the mean-value estimate
\[
\ds\sum_{d<x^{\vartheta}}\ds\max_{(a,d)=1}|\Delta_{f}(x;d,a)|\ll\frac{x}{(\log x)^A},\eqno(3)
\]
which shows that (2) holds for $d<x^{\vartheta}$ on average.  The celebrated  Bombieri-Vinogradov theorem asserts that (3) holds for $f=\Lambda$ with $\vartheta=1/2-\varepsilon$.
It is now known that, with  $\vartheta=1/2-\varepsilon$, (3) is valid for a large class of arithmetic functions, including all the divisor functions.

It is conjectured that (3) is valid for fairly general arithmetic functions $f$ with $\vartheta=1-\varepsilon$. However, for most functions $f$,  it seems extremely difficult to prove that (3)
holds with some $\vartheta$ greater than $1/2$. For example, if  $f=\tau_3$, the best result is that (3) holds with $\vartheta=11/21-\varepsilon$ (see  [4]), while one is still unable to prove (3) with any $\vartheta>1/2$ if $f=\tau_k$, $k\ge 4$.

In the recent work of  the third author on the bounded gaps between  primes [9], a crucial step is to show that, for any fixed $a\ne 0$,
\[
\ds\sum_{\substack{d<x^{\vartheta}\\d|\mathcal{P}(x^\varpi)\\(d,a)=1}}\ds|\Delta_{\Lambda}(x;d,a)|\ll\frac{x}{(\log x)^A}\eqno(4)
\]
with
 $$(\vartheta,\,\varpi)=\bigg(\frac{293}{584},\,\frac{1}{1168}\bigg),$$ where
 $$\mathcal{P}(y)=\prod_{p\leq y}p.$$
The values of $\vartheta$ and $\varpi$ have been improved by Polymath  [7]. In fact, both [7] and [9] deal with  somewhat more general forms. The purpose of this paper is to show that
the methods in [9] equally  apply to the $\tau_k$  in place of $\Lambda$. Our main result is

\begin{theorem}\label{thm1}
Let $\vartheta$ and $\varpi$ be as above.   For any  $k\geq 4$ and $a\neq 0$ we have
\[
\ds\sum_{\substack {d<x^{\vartheta}\\( d,\mathcal{P}(x^{\varpi}))>x^{1/8-4\varpi}\\ (d,a)=1}}\mu(d)^2\left|\Delta_{\tau_k}(x;d,a)\right|\ll x\exp{\left(-(\log x)^{1/2}\right)},\eqno(5)
\]
where the implied constant depends on $k$ and $a$.
\end{theorem}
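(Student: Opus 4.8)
\medskip
\noindent\emph{Strategy of the proof.} The plan is to transplant the method of [9], together with its refinements in [7], from the von Mangoldt function to the $k$-fold divisor function. The combinatorial input is simpler here, since $\tau_k=\mathbf 1\ast\cdots\ast\mathbf 1$ is already a $k$-fold Dirichlet convolution, so no analogue of Heath--Brown's identity is needed: writing $\tau_k(n)=\sum_{n=m_1\cdots m_k}1$, I would split each $m_i$ into a dyadic range $[M_i,2M_i)$ with $\prod_iM_i\asymp x$, merge the variables (using Dirichlet's hyperbola identity where convenient), and thereby reduce (5), up to an admissible error, to a bounded number of bilinear (``Type I'' and ``Type II'') and trilinear (``Type III'') mean-value estimates of exactly the shape treated in [7] and [9], but now carrying divisor-bounded rather than $\Lambda$-type coefficients. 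The hypothesis $(d,\mathcal P(x^\varpi))>x^{1/8-4\varpi}$ will be used throughout to factor each relevant modulus as $d=d'd''$ with $d'\mid\mathcal P(x^\varpi)$ squarefree and $d'>x^{1/8-4\varpi}$ (so $d''=d/d'$ is correspondingly small); since $d'$ is $x^\varpi$-smooth, any divisor of $d'$ in a prescribed dyadic interval can be peeled off up to a factor of size $x^\varpi$, and it is precisely this ``well-factorable'' flexibility that pushes the argument past $x^{1/2}$.

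For the Type I and Type II estimates I would follow [9] closely: open the square by Cauchy--Schwarz (Linnik's dispersion method), complete the inner sums over residue classes, and thus reduce to incomplete Kloosterman-type sums to moduli $d'd''$; the factorization of $d$, together with the freedom to split $d'$, then balances the lengths of the resulting sums, and the required cancellation comes from the Weil bound for Kloosterman sums and its standard twisted and averaged variants. Because $\tau_k(n)\ll_k n^\varepsilon$ and the main terms of the dyadic pieces reassemble into $\varphi(d)^{-1}\sum_{n\le x,(n,d)=1}\tau_k(n)$, subtracting the main term of (1) is harmless; in the generic ranges one gets a genuine power saving $x^{1-\delta}$, and only the finitely many boundary ranges, after summation over the dyadic decomposition, account for the weaker factor $\exp(-(\log x)^{1/2})$ in (5).

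The Type III estimate is the deepest ingredient, and I expect it to be the main obstacle. For $k\ge4$ the dyadic variables can, in the worst case, be merged only into three factors each of size about $x^{1/3}$, and in that configuration no Type I/II estimate applies; one then needs the analogue of the Friedlander--Iwaniec/Heath--Brown bound for the relevant three-dimensional exponential sum, whose square-root cancellation rests on Deligne's estimates (via the Birch--Bombieri evaluation). I would import this input essentially verbatim from [7] and [9], the remaining work being to check that the merged divisor coefficients $\tau_j$ with $j<k$ attached to the three ranges are admissible inputs (they are, being short sums of the constant function) and --- the delicate combinatorial point --- that for every $k\ge4$ the reduction never strands a configuration outside the union of the Type I, II and III ranges, which is exactly where the numerical value $\vartheta=293/584$ is consumed. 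Summing the $O\big((\log x)^{O_k(1)}\big)$ dyadic contributions, each $\ll x\exp(-c(\log x)^{1/2})$, would then give (5).
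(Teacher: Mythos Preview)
Your strategy matches the paper's: localize $\tau_k$ as a convolution of interval indicators, factor each modulus via the smoothness hypothesis $(d,\mathcal P(x^\varpi))>x^{1/8-4\varpi}$ (this is the paper's Lemma~\ref{lemma2}), and feed the resulting sums into the Type~I/II/III machinery of~[9]. The paper actually passes through an auxiliary Theorem~\ref{thm2} with intervals $[N_i,(1+\rho)N_i)$, $\rho=\exp(-(\log x)^{7/12})$, rather than dyadic ones, but this is a minor technical choice and your dyadic version would work equally well.

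Two points deserve correction. First, your description of the Type~III configuration is off: the Friedlander--Iwaniec/Birch--Bombieri input in~[9] needs \emph{three} of the four factors to be genuine interval indicators, not ``merged divisor coefficients $\tau_j$''. The paper keeps $\beta_1,\beta_2,\beta_3$ unmerged and collects only $\beta_4*\cdots*\beta_k$ into the general weight $\alpha$, obtaining $\gamma=\alpha*\beta_1*\beta_2*\beta_3$ with $\alpha$ arbitrary and the $\beta_i$ smooth; merging further would destroy the structure the exponential-sum bound requires. Second, and more substantively, the ``delicate combinatorial point'' you flag but do not prove is exactly the paper's one new lemma (Lemma~\ref{lemma1}): if $\nu_1<5/8-8\varpi$ and no partial sum $\sum_{i\in I}\nu_i$ falls in $[3/8+8\varpi,\,5/8-8\varpi]$, then $\nu_1+\nu_4+\cdots+\nu_k<3/8+8\varpi+o(1)$, which is precisely the size constraint needed for the Type~III estimate of~[9] to apply. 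The argument is a short pigeonhole (find $l$ with $\nu_l>1/4-16\varpi$, hence $\nu_2+\nu_3>1/2-32\varpi>3/8+8\varpi$, hence by hypothesis $\nu_2+\nu_3>5/8-8\varpi$), but without it your reduction to~[9] is incomplete; you should supply it rather than assert it.
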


Theorem \ref{thm1} admits several refinements. First, using the results of [7], it is likely to show that (5) is valid for any $\vartheta$ and $\varpi$ satisfying
\[
43\bigg(\vartheta-\frac12\bigg)+27\varpi<1
\]
(see [3, Page 185]); second, the factor $\mu(d)^2$  in  (5) that restricts the summation over square-free moduli could be removed; third, with extra effort,  it is even possible to replace the right side of (1) by $x^{1-\eta}$ for some $\eta>0$ depending on $k$, $\vartheta$ and $\varpi$.

  Forl $k\geq 4$, let  $N_i$, $1\le i\le k$, be such that
$$1\leq N_{k}\leq N_{k-1}\leq\ldots \leq N_{1}$$ and
$$x\leq N_{k}N_{k-1}\cdot\cdot\cdot N_{1}< 2x.$$ Let $\gamma=\beta_{1}\ast\beta_{2}\ast\ldots\ast\beta_{k}$ where  $\beta_{i}$ is   given by
\[
\beta_i(n)=
\begin{cases}
$1$, \quad & \text{if } n\in [N_i,(1+\rho) N_i),\\
$0$, & \text{otherwise}
\end{cases}
\]
with $\rho=\exp(-(\log x)^{7/12})$. Note that $\gamma$ is supported on $[x,\,3x)$.

 Using the arguments in [9, Section 6], we deduce  Theorem \ref{thm1}  from the following

\begin{theorem}\label{thm2}
With the same  notation as above, we have
\[
\ds\sum_{\substack {d<x^{\vartheta}\\( d,\mathcal{P}(x^{\varpi}))>x^{1/8-4\varpi}\\ (d,a)=1}}\mu(d)^2\left|\Delta_{\gamma}(3x;d,a)\right|\ll x\exp{\left(-(\log x)^{2/3}\right)}.\eqno(6)
\]
\end{theorem}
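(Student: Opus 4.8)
The plan is to bound the left side of (6) by decomposing $\gamma=\beta_{1}\ast\cdots\ast\beta_{k}$ according to the relative sizes of $N_{1}\ge\cdots\ge N_{k}$ into a bounded number of sums of Type~I, Type~II and Type~III shape, and then to invoke the dispersion-method and exponential-sum estimates of [9] (in the sharper ranges of [7]). Note that, unlike in the treatment of $\Lambda$, no combinatorial identity (Vaughan or Heath-Brown) is needed here, since $\gamma$ is already presented as a $k$-fold convolution of indicators of short intervals. First I would record the use of the hypothesis on $d$. Since $d$ is squarefree and its $x^{\varpi}$-smooth part exceeds $x^{1/8-4\varpi}$, for any target $y\le x^{1/8-4\varpi}$ one may write $d=d_{1}d_{2}$ with $d_{1}$ an $x^{\varpi}$-smooth divisor of $d$ satisfying $y<d_{1}\le x^{\varpi}y$. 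Because $\vartheta-\tfrac12=\tfrac1{584}$ and $\varpi=\tfrac1{1168}$, taking $y$ to be a small fixed power of $x$ already forces $d_{2}<x^{1/2-\delta}$ for some fixed $\delta>0$; thus $d_{2}$ lies well below $x^{1/2}$ while the smooth factor $d_{1}$ (which may itself be split further, still smoothly) supplies the extra length beyond $x^{1/2}$. The threshold $x^{1/8-4\varpi}$ is dictated by the largest smooth divisor of $d$ one needs to extract in the Type~II and Type~III estimates for the extremal tuples.

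Second, the combinatorial core is a sorting argument on $(N_{1},\dots,N_{k})$. If $N_{1}$ is large enough that $\beta_{2}\ast\cdots\ast\beta_{k}$ is supported on an interval short enough to lie in the Type~I range for $(\vartheta,\varpi)$, take $\alpha=\beta_{2}\ast\cdots\ast\beta_{k}$ and $\beta=\beta_{1}$ and estimate the Type~I sum $\sum_{r}\alpha(r)\bigl(\sum_{r\ell\equiv a(d)}\beta(\ell)-\tfrac1{\varphi(d)}\sum_{(r\ell,d)=1}\beta(\ell)\bigr)$. Otherwise, merge the smallest variables $N_{k},N_{k-1},\dots$ into a block until, together with a block formed from the remaining variables, one obtains a Type~II splitting $\gamma=\alpha_{1}\ast\alpha_{2}$ with both factors in the admissible middle range; monotonicity of the $N_{i}$ makes this possible unless every $N_{i}$ is small, and in that remaining case---which is where $k\ge4$ is used---the variables partition into three blocks $S_{1}\sqcup S_{2}\sqcup S_{3}$ with $\prod_{i\in S_{j}}N_{i}<x^{1/2-\delta}$ for each $j$, a Type~III configuration to which Zhang's triple-convolution estimate applies. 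Only $O_{k}(1)$ cases arise, and in each one quotes the matching estimate from [9] (as refined in [7]); crucially, none of those estimates uses dense divisibility of $d$ above scale $x^{1/8-4\varpi}$, so our weaker hypothesis suffices in place of the full smoothness $d\mid\mathcal P(x^{\varpi})$ available in [9].

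The hard part will be the Type~III estimate: after Cauchy--Schwarz and the factorization $d=d_{1}d_{2}$, it rests on Deligne's bound for the complete exponential sum underlying the Friedlander--Iwaniec/Heath-Brown argument (a Birch--Bombieri-type estimate for a hyper-Kloosterman sum), and this is what pins down the numerical value of $\vartheta$ and the restriction $k\ge4$. The Type~I and Type~II sums are comparatively routine, flowing from the Bombieri--Friedlander--Iwaniec dispersion method and Weil's bound for Kloosterman sums, with $d_{1}$ in the role of the auxiliary small modulus. Two pieces of bookkeeping then close the argument. First, replacing $\Lambda$ by $\gamma$ affects nothing beyond giving the coefficient sequences $\alpha$ a divisor-bounded size, which is harmless throughout. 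Second, the short-interval parameter $\rho=\exp(-(\log x)^{7/12})$ is carried through each estimate, where it costs only factors $\rho^{-O(1)}=\exp(O((\log x)^{7/12}))$; these are absorbed into the claimed saving $\exp(-(\log x)^{2/3})$ because $7/12<2/3$, while the outer smoothed variable produces the main-term subtraction in (6). Summing the $O_{k}(1)$ contributions yields (6).
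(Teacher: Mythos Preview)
Your overall strategy---sort $(\nu_1,\dots,\nu_k)$ into trivial, Type~I/II, and Type~III configurations and then quote the corresponding estimates from [9]---is the paper's strategy, and your remarks that no Heath--Brown identity is needed and that the parameter $\rho$ costs only $\exp(O((\log x)^{7/12}))$ are both used there. But two of your steps diverge from what the argument actually requires, and the first is a genuine gap.

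\emph{The Type~III configuration.} You propose to partition $\{1,\dots,k\}$ into three blocks $S_1\sqcup S_2\sqcup S_3$ with $\prod_{i\in S_j}N_i<x^{1/2-\delta}$ and apply ``Zhang's triple-convolution estimate'' to $f_1*f_2*f_3$, where $f_j=\ast_{i\in S_j}\beta_i$. Zhang's Type~III estimate, however, is for a four-fold decomposition $\alpha*\kappa_1*\kappa_2*\kappa_3$ in which each $\kappa_j$ is a genuine interval indicator, so that Poisson summation in each smooth variable produces the Birch--Bombieri exponential sum; and the operative constraint is not ``each block $<x^{1/2-\delta}$'' but rather $MN_1\ll x^{3/8+8\varpi}$, where $M$ is the length of $\alpha$ and $N_1$ the largest smooth length. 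For $|S_j|\ge 2$ your $f_j$ is a restricted divisor function on a short interval, not an indicator, and one cannot Poisson-sum it. The paper instead keeps $\beta_1,\beta_2,\beta_3$ intact as the three smooth factors, takes $\alpha=\beta_4*\cdots*\beta_k$, and proves the required size bound by a separate combinatorial lemma: if $\nu_1<5/8-8\varpi$ and no subset sum of the $\nu_i$ falls in $[3/8+8\varpi,\,5/8-8\varpi]$, then $\nu_1+\nu_4+\cdots+\nu_k<3/8+8\varpi+o(1)$. (Sketch: the gap hypothesis forces some $\nu_l>1/4-16\varpi$ with $l\ge 3$, hence $\nu_2+\nu_3>1/2-32\varpi>3/8+8\varpi$, and then the gap again gives $\nu_2+\nu_3>5/8-8\varpi$.) This lemma, not a three-block partition, is the combinatorial kernel of the Type~III case.

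\emph{Factoring the modulus.} For Type~I/II one needs $d=qr$ with $r$ localized near $N\in[x^{3/8+8\varpi},\,x^{1/2}]$, not merely with the cofactor below $x^{1/2-\delta}$. Your smooth-only extraction confines the chosen divisor to size at most $x^{1/8-3\varpi}$; this is enough for the small auxiliary factor in Type~III but cannot reach $r\asymp N$. The paper obtains $r\in(R,x^\varpi R)$ for any $R$ in $[x^\varpi,x^{44\varpi}]$ \emph{or} in $[x^{3/8+6\varpi},x^{1/2-3\varpi}]$ by first discarding the (harmless) moduli whose $\exp((\log x)^{1/4})$-smooth part exceeds $x^\varpi$, and then combining the product of prime factors of $d$ above $x^\varpi$---which is at most $x^{3/8+6\varpi}$ since $d<x^\vartheta$---with a further $x^\varpi$-smooth divisor. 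The second range, which feeds the Type~I/II estimates, is not reachable from the $x^\varpi$-smooth part of $d$ alone.
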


\textit{Combinatorial arguments. }Write $N_{i}=x^{\nu_{i}}$, so that
\[
0\leq \nu_{k}\leq \ldots \leq\nu_{1}\eqno(7)
\]
and
\[
 1\leq \nu_{k}+\cdot\cdot\cdot+\nu_{1}<1+\frac{\log 2}{\log x}.\eqno(8)
\]
\medskip\noindent

\begin{lemma}  \label{lemma1}
Suppose that
$$
\nu_1<\frac58-8\varpi,\eqno(9)
$$
and, for  any subset $I$ of $\{1,2,...,k\}$ , that
\[
\sum_{i\in I}\nu_i\notin\bigg[\frac{3}{8}+8\varpi ,\, \frac{5}{8}-8\varpi\bigg].\eqno(10)
\]
Then we have
\[
 \nu_{k}+\cdot\cdot\cdot+\nu_{4}+\nu_1<\frac{3}{8}+8\varpi+\frac{\log 2}{\log x}.\eqno(11)
\]
\end{lemma}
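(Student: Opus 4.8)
The plan is to obtain (11) from the stronger inequality $\nu_2 + \nu_3 > \tfrac58 - 8\varpi$. Indeed, with $S := \nu_1 + \cdots + \nu_k$ we have $\nu_k + \cdots + \nu_4 + \nu_1 = S - \nu_2 - \nu_3$, so (8) would then give $\nu_k + \cdots + \nu_4 + \nu_1 < (1 + \tfrac{\log 2}{\log x}) - (\tfrac58 - 8\varpi) = \tfrac38 + 8\varpi + \tfrac{\log 2}{\log x}$, which is exactly (11). Hence it suffices to rule out $\nu_2 + \nu_3 \le \tfrac58 - 8\varpi$; assume this for contradiction. Before anything else I would record two consequences of the hypotheses. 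First, (10) with $I = \{1\}$ shows $\nu_1 \notin [\tfrac38+8\varpi,\tfrac58-8\varpi]$, and combined with (9) this forces $\nu_1 < \tfrac38 + 8\varpi$, so that $\nu_i < \tfrac38 + 8\varpi$ for every $i$ by (7). Second, (10) with $I = \{2,3\}$ upgrades the assumption to $\nu_2 + \nu_3 < \tfrac38 + 8\varpi$, and since $\nu_3 \le \nu_2$ this gives $\nu_i \le \nu_3 < \tfrac{3}{16} + 4\varpi$ for all $i \ge 3$; as $\varpi = \tfrac1{1168}$ satisfies $\tfrac{3}{16} + 4\varpi < \tfrac14 - 16\varpi$ (with room to spare), each such $\nu_i$ is strictly less than the length $\tfrac14 - 16\varpi$ of the interval $J := [\tfrac38+8\varpi,\tfrac58-8\varpi]$ excluded in (10).

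The contradiction then comes from a crossing argument (one cannot jump over an interval in steps shorter than its length) applied to a carefully chosen chain of subset sums. I would consider the non-decreasing sequence
\[
\nu_1,\quad \nu_1 + \nu_3,\quad \nu_1 + \nu_3 + \nu_4,\quad \ldots,\quad \nu_1 + \nu_3 + \nu_4 + \cdots + \nu_k = S - \nu_2,
\]
that is, the partial sums over $\{1\}$, $\{1,3\}$, $\{1,3,4\}$, and so on up to $\{1,\ldots,k\}\setminus\{2\}$. Each term equals $\sum_{i\in I}\nu_i$ for some $I \subseteq \{1,\ldots,k\}$, so by (10) none of them lies in $J$. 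On the other hand the first term $\nu_1$ is $< \tfrac38 + 8\varpi$ (left of $J$), the last term $S - \nu_2 \ge 1 - \nu_2 > \tfrac58 - 8\varpi$ (right of $J$, using $S \ge 1$ and $\nu_2 < \tfrac38 + 8\varpi$), and each successive increment is one of $\nu_3, \nu_4, \ldots, \nu_k$, hence $< |J|$. Taking the last index at which the partial sum is still $< \tfrac38 + 8\varpi$, the next partial sum is $\ge \tfrac38 + 8\varpi$ yet, being obtained by adding an increment of size $< |J|$, is $< \tfrac38 + 8\varpi + |J| = \tfrac58 - 8\varpi$; so it lies in $J$, contradicting (10). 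This finishes the proof.

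I do not expect a serious obstacle here, but the one point that needs care is the choice of chain: one must delete exactly one of the two largest parts $\nu_2,\nu_3$ (here $\nu_2$) and cross $J$ using the remaining parts. Deleting $\nu_2$ is precisely what keeps the terminal sum $S - \nu_2$ above $\tfrac58 - 8\varpi$, while the bound $\nu_2 + \nu_3 < \tfrac38 + 8\varpi$, through $\nu_3 \le \tfrac12(\nu_2+\nu_3)$, is exactly what makes every increment $\nu_3,\nu_4,\ldots,\nu_k$ shorter than $|J|$, so that no jump over $J$ is possible. The rest is routine, amounting to the two harmless numerical checks $\varpi < \tfrac1{64}$ (so that $J$ is a genuine nondegenerate interval) and $\varpi < \tfrac1{320}$ (so that the increments are short enough), both of which hold for $\varpi = \tfrac1{1168}$.
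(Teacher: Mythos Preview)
Your proof is correct and follows essentially the same approach as the paper: reduce (11) to the inequality $\nu_2+\nu_3>\tfrac58-8\varpi$, note that $\nu_1<\tfrac38+8\varpi$, and run a crossing argument on a chain of partial sums (with increments $\nu_3,\nu_4,\ldots$) to force some subset sum into the forbidden interval. The only cosmetic difference is that the paper argues directly rather than by contradiction---it uses the chain $\nu_2,\ \nu_2+\nu_3,\ \ldots,\ \nu_2+\cdots+\nu_k$ (removing $\nu_1$) to locate an index $l\ge 3$ with $\nu_l>\tfrac14-16\varpi$, whence $\nu_2+\nu_3\ge 2\nu_l>\tfrac12-32\varpi>\tfrac38+8\varpi$, and then applies (10) to $\{2,3\}$---whereas you remove $\nu_2$, first apply (10) to $\{2,3\}$ to make every increment shorter than $|J|$, and obtain the contradiction at the crossing step itself.
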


\begin{proof} By virtue of (8), it suffices to show that
\[
\nu_3+\nu_2>\frac58-8\varpi.\eqno(12)
\]

By (7),  (9) and (10) we have
\[
\nu_2\le\nu_1<\frac38+8\varpi.
\]
It follows by (8) that there exists an index $l\in[3,k]$ such that
\[
\nu_l+\nu_{l-1}+\cdot\cdot\cdot+\nu_2> \frac{5}{8}-8\varpi
\]
and
\[
\nu_{l-1}+\cdot\cdot\cdot+\nu_2< \frac{3}{8}+8\varpi,
\]
so that
\[
\nu_l>\frac14-16\varpi.
\]
Hence, by (7),
\[
\nu_3+\nu_2>\frac12-32\varpi>\frac38+8\varpi.
\]
This, together with (10), leads to (12).

\end{proof}
\medskip\noindent

For notational simplicity we write
\[
\mathcal{P}_1=\mathcal{P}(x^{\varpi}),\qquad \mathcal{P}_0=\mathcal{P}(\exp\{(\log x)^{1/4}\}).
\]
\medskip\noindent

\begin{lemma} \label{lemma2}

Let $\varepsilon>0$ be an arbitrarily small constant. Assume
\[
x^{1/2-\varepsilon}<d<x^{\vartheta},\qquad \mu(d)^2=1,\eqno(13)
\]
\[
(d,\,\mathcal{P}_0)<x^{\varpi}\eqno(14)
\]
and
\[
(d,\mathcal{P}_1)>x^{1/8-4\varpi}.\eqno(15)
\]
Then,  for any $R$ satisfying $x^\varpi\le R\le x^{44\varpi}$ or $x^{3/8+6\varpi}\le R\le x^{1/2-3\varpi}$,  there is a divisor $r$ of $d$ such that
\[
R<r<x^{\varpi}R
\]
and
\[(d/r,\,\mathcal{P}_0)=1.
\]
\end{lemma}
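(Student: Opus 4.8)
The plan is to reduce the statement to an elementary fact about the divisors of a squarefree integer. Write $d=d_0d_1d_2$, where $d_0=(d,\mathcal{P}_0)$ is the product of the prime factors of $d$ below $\exp\{(\log x)^{1/4}\}$, $d_1$ is the product of those lying in $(\exp\{(\log x)^{1/4}\},x^{\varpi}]$, and $d_2=d/(d_0d_1)$ is the product of those exceeding $x^{\varpi}$. By (14), $d_0<x^{\varpi}$; by (15), $d_0d_1=(d,\mathcal{P}_1)>x^{1/8-4\varpi}$, hence $d_1>x^{1/8-5\varpi}$. Note that $(d/r,\mathcal{P}_0)=1$ is equivalent to $d_0\mid r$, so in every case it suffices to produce a divisor $r$ of $d$ with $d_0\mid r$ and $R<r<x^{\varpi}R$.

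The one tool I would use is this: consecutive divisors of a squarefree $m$, listed increasingly, differ by a factor at most the largest prime $P$ of $m$. (If $e<m$ is a divisor then either $P\nmid e$, so $eP\mid m$; or $e=Pe_0$ and some prime $q\le P$ with $q\nmid e_0$ satisfies $eq\mid m$; either way the next divisor after $e$ is $\le eP$.) Hence, if $n$ is squarefree, $D\mid n$, and every prime factor of $n/D$ is $\le x^{\varpi}$, then the multiples of $D$ dividing $n$ increase from $D$ to $n$ with successive ratios $\le x^{\varpi}$; so for every real $T$ with $D\le T<n$ there is a divisor $r$ of $n$ with $D\mid r$ and $T<r\le x^{\varpi}T$ — and $d_0\mid r$ whenever $d_0\mid D$. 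It then remains only to pick, in each range for $R$, a suitable triple $(n,D,T)$.

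If $x^{\varpi}\le R\le x^{44\varpi}$ then, since $48\varpi<1/8$ (as $\varpi=1/1168$), we have $R\le x^{44\varpi}<x^{1/8-4\varpi}<d_0d_1$, and as $d_0<x^{\varpi}\le R$ the tool with $(n,D,T)=(d_0d_1,d_0,R)$ gives $r\mid d_0d_1\mid d$ with the required properties. If $x^{3/8+6\varpi}\le R\le x^{1/2-3\varpi}$, the crux is the inequality $d/d_1<x^{\varpi}R$: indeed $d/d_1<x^{\vartheta}/x^{1/8-5\varpi}=x^{\vartheta-1/8+5\varpi}$, and here the precise values enter, since $\vartheta=1/2+2\varpi$ exactly ($293/584=292/584+1/584=1/2+2\varpi$), so the exponent equals $3/8+7\varpi$ and $d/d_1<x^{3/8+7\varpi}=x^{\varpi}\cdot x^{3/8+6\varpi}\le x^{\varpi}R$. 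If $d/d_1>R$, take $r=d/d_1$ (a multiple of $d_0$, and automatically in $(R,x^{\varpi}R)$). Otherwise $d/d_1\le R$, and — with $\varepsilon<3\varpi$, so that $R\le x^{1/2-3\varpi}<x^{1/2-\varepsilon}<d$ by (13) — the tool with $(n,D,T)=(d,\,d/d_1,\,R)$, whose complementary primes are exactly those of $d_1$ and hence all $\le x^{\varpi}$, yields the desired $r\mid d$.

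Thus the whole argument is bookkeeping about divisors of squarefree numbers once the two numerical inputs $\varpi\le 1/384$ and $\vartheta=1/2+2\varpi$ are available; the identity $\vartheta=1/2+2\varpi$, which is precisely the calibration imposed on the admissible exponents, is what makes the second range work and is the only substantive ingredient. The points I expect to need a little care are purely boundary cases — equality in one of the size constraints (13)–(15), the possibility that $R$ coincides with a relevant divisor (so that the upper bound $r\le x^{\varpi}T$ must be made strict), and the degenerate case $d_2=1$ — and none of these should present real difficulty.
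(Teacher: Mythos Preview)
Your argument is correct and follows essentially the same route as the paper: the same factorisation $d=d_0d_1d_2$, the same observation that $\vartheta=1/2+2\varpi$ forces $d_0d_2<x^{3/8+7\varpi}$, and the same use of the ``consecutive divisors of a squarefree number differ by at most its largest prime'' principle (which the paper invokes by citing [9,\,Lemma~4]) to locate $r$ of the form $d_0\cdot(\text{divisor of }d_1)$ in the small range and $d_0d_2\cdot(\text{divisor of }d_1)$ in the large range. Your write-up is more explicit about the divisor-jumping step and the numerical checks, but there is no substantive difference in method.
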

\medskip\noindent

\begin{proof} Since $d$ is square-free, we may write $d=d_0d_1d_2$ with
\[
d_0=(d,\mathcal{P}_0),\qquad d_1=\frac{(d,\mathcal{P}_1)}{(d,\mathcal{P}_0)},\qquad d_2=\prod_{\substack{p|d\\p>x^{\varpi}}}p.
\]
Note that $\vartheta=1/2+2\varpi$. By  (13), (14) and (15) we have $d_0<x^{\varpi}$ and $d_2<x^{3/8+6\varpi}$ . Thus, in a way similar to the proof of [9,\,Lemma 4], it can be verified that, for any
$R\in[x^\varpi, x^{44\varpi}]$, $d_1$ has a divisor $r_1$ such that
\[
R<d_0r_1<x^\varpi R,
\]
and, for any  $R\in[x^{3/8+6\varpi}, x^{1/2-3\varpi}]$, $d_1$ has a divisor $r_2$ such that
\[
R<d_0d_2r_2<x^\varpi R.
\]
The assertion therefore follows by choosing $r=d_0r_1$ and $r=d_0d_2r_2$ respectively.
\end{proof}
\medskip\noindent

\begin{proof} [Proof of Theorem \ref{thm2}] First we note that, on the left-hand side of  (6), the contribution from the terms with $d<x^{1/2-\varepsilon}$  is acceptable. This is easily proved via the large sieve inequality (see [1,\,Theorem 0]). We can write $\gamma=\alpha*\beta$ with $\beta=\beta(n)$ supported on $[N,\,2N)$ for some $N$,
and verify that  for any $q$,  $r$ and $a$ satisfying $(a,r)=1$,
\[
\sum_{\substack {n\equiv a(r)\\ (n,q)=1}}\beta(n)-\frac{1}{\varphi(r)}\ds\sum_{(n,qr)=1}\beta(n)\ll x^{-\kappa}N\eqno (16)
\]
with some constant $\kappa>0$. This bound, which  is much sharper than  the ``Siegel-Walfisz" assumption, is used for bounding the errors arising from small moduli.

In order to prove Theorem \ref{thm2}, on the left-hand side of (6), we can replace the range $d<x^{\vartheta}$ by  $x^{1/2-\varepsilon}<d<x^{\vartheta}$, and impose the constraint  (14).

The proof is divided into three cases:

Case (a).  $\nu_1\ge 5/8-8\varpi$ .

Case (b). $\nu_1<5/8-8\varpi$ and (10) is valid for any $I\subset\{1,2,...,k\}$.

Case (c). There is a subset $I$ of $\{1,2,...,k\}$ such that
\[
\sum_{i\in I}\nu_i\in\bigg[\frac{3}{8}+8\varpi ,\, \frac{5}{8}-8\varpi\bigg].
\]
\medskip\noindent

{Proof in Case (a)}.\, Since
$$
\frac58-8\varpi>\vartheta,
$$
for any $d<x^{\vartheta}$ we have trivially
$$
\Delta_{\gamma}(3x;d,a)\ll\frac{1}{\varphi(d)}x\exp\{-(\log x)^{-2/3}\}.\eqno(17)
$$
This leads to (6).
\medskip\noindent

{Proof in Case (b)}.\, Write  $\alpha=\beta_{4}\ast\ldots\ast\beta_{k}$ so that $\gamma=\alpha\ast\beta_1\ast\beta_2\ast\beta_3$. Let
\[
M=\prod_{4\le i\le k}N_i.
\]
Note that $\alpha\ast\beta_1$  is supported on $[MN_1,\,2MN_1)$ and, by Lemma \ref{lemma1}, $MN_1\ll x^{3/8+8\varpi}$. Hence,  by the Type III estimate in [9] , we find that (17) is valid for any $d$ satisfying
(13), (14) and (15).
Here we have also used Lemma \ref{lemma2}.  For details the reader is referred to the last two sections of [9].
\medskip\noindent

{Proof in Case (c)}. Without loss of generality, we can assume that there is a subset $I$  of   $\{1,2,...,k\}$ such that
\[
\frac38+8\varpi<\sum_{i\in I}\nu_i<\frac12+\frac{\log 2}{2\log x}.\eqno(18)
\]

Let $J$ be the complement of $I$ in  $\{1,2,...,k\}$. Write
\[
 J=\{j_1,\,j_2,...,j_m\},\qquad I=\{i_1,\,i_2,...,i_l\},
\]
\[
\alpha=\beta_{j_1}\ast\beta_{j_2}\ast...\ast\beta_{j_m}\qquad\text{and}\qquad \beta=\beta_{i_1}\ast\beta_{i_2}\ast...\ast\beta_{i_l},
\]
so that $\gamma=\alpha\ast\beta$. Note that $\alpha$ and $\beta$ are supported on $[M,2M)$ and $[N,2N)$ respectively, where
\[
M=\prod_{j\in J}N_j,\qquad N=\prod_{i\in I}N_i.
\]
By (18) we have
\[
x^{3/8+8\varpi}<N\ll x^{1/2}.
\]
By Lemma \ref{lemma2}, the proof is now reduced to showing that
\[
\sum_{\substack{Q\le q<2Q\\(q,a\mathcal{P}_0)=1}}\,\sum_{\substack{R\le r<2R\\(r,a)=1}}\,\mu(qr)^2|\Delta_\gamma(3x;qr,a)|\ll  x\exp{\left(-2(\log x)^{2/3}\right)}\eqno(19)
\]
with $Q$ and $R$ satisfying
$x^{1/2-\varepsilon}\ll QR\ll x^{\vartheta}$,
\[
x^{-\varpi-\varepsilon} N\ll R\ll x^{-\varepsilon}N\quad\text{if}\quad N<x^{1/2-4\varpi},
\]
 and
\[
x^{-4\varpi} N\ll R\ll x^{-3\varpi}N\quad\text{if}\quad N\ge x^{1/2-4\varpi}.
\]

By the Type I and II estimates in [9] we obtain (19). Here we apply the estimate (16) for bounding the errors arising from small moduli in place of the  ``Siegel-Walfisz" assumption. For details the reader is referred to Section 7-12 of [9].
\end{proof}

\end{document}